\begin{document}

\newtheorem{theorem}{Theorem}[section]
\newtheorem{lemma}[theorem]{Lemma}
\newtheorem{proposition}[theorem]{Proposition}
\newtheorem{corollary}[theorem]{Corollary}
\newtheorem{definition}[theorem]{Definition}
\newtheorem{example}[theorem]{Example}
\newtheorem{remark}[theorem]{Remark}

\newenvironment{proof}[1][Proof]{\begin{trivlist}
\item[\hskip \labelsep {\bfseries #1}]}{\end{trivlist}} 

\newcommand{\qed}{\nobreak \ifvmode \relax \else
      \ifdim\lastskip<1.5em \hskip-\lastskip
      \hskip1.5em plus0em minus0.5em \fi \nobreak
      \vrule height0.75em width0.5em depth0.25em\fi}

\title{Characterizing finitary functions over non-archimedean RCFs via a topological definition of OVF-integrality}

\author{Yoav Yaffe \\ \footnotesize{yoavy6174@gmail.com} }

\maketitle

\begin{abstract}
When $R$ is a non-archimedean real closed field we say that a function $f\in R(\bar{X})$ is finitary at a point $\bar{b}\in R^n$ if on some neighborhood of $\bar{b}$ the defined values of $f$ are in the finite part of $R$.  In this note we give a characterization of rational functions which are finitary on a set defined by positivity and finiteness conditions.  The main novel ingredient is a proof that OVF-integrality has a natural topological definition, which allows us to apply a known Ganzstellensatz for the relevant valuation.  We also give some information about the Kochen geometry associated with OVF-integrality.
\end{abstract}

\section {Introduction}

Let $R$ be a non-archimedean real closed field, and recall that the {\em finite part} of $R$ is the convex hull of $\mathbb{Z}\subseteq R$.   

\begin{definition}  \label{finitary}
We say that a function $f\in R({\bar X})$ is {\em finitary} at ${\bar b}\in R^n$ if on some neighborhood of $\bar{b}$ its values $f({\bar x})$ are in the finite part of $R$ whenever defined.
\end{definition}
 
The above definition is quoted in \cite{L} (Definition 4.4 there) with the term `bounded', which the author proposed originally.  However in retrospect `finitary' is more appropriate, e.g. note that the collection of functions which are finitary at a given point is not closed under multiplication by an infinite constant.
 
Let ${\bar p}=(p_1,..,p_m)$ be polynomials in $R[{\bar X}]$, let ${\bar g}=(g_1,...,g_l)$ be rational functions in $R({\bar X})$, and define 
\[T=T_{{\bar p},{\bar g}} = \left\{{\bar b} \in R^n\ |\ p_1({\bar b}),\ldots,p_m({\bar b})>0\ \wedge\ g_1,\ldots,g_l \mbox{ are finitary at }{\bar b}\right\} \] 

There is a canonical valuation $v$ on $R$ such that its ring of integers $O_R = \{x\in R\ |\ v(x)\ge 0\}$ equals the finite part of $R$.  We wish to use a Ganzstellensatz on $(R,v)$ (see \cite{LY}, Theorem 7.4) to get a characterization of functions which are finitary on $T$ (i.e. at every point of $T$).  To this end we will need to show that a function is finitary at a point ${\bar b}$ if and only if it is OVF-integral there.  Indeed we'll prove below (Proposition ~\ref{topol}) that for any RCVF the analogous topological definition is equivalent to OVF-integrality, this result being the main contribution of the present paper.  

Note that the proof given in \cite{L} (Lemma 3.6 there) of the harder direction of Proposition ~\ref{topol} seems to be erroneous - clearly the bounded sequence $f(b_n)$ need not have a convergent sub-sequence, but even if it does, the value of the limit $\ell$ might not equal the formula given there.  For example consider the function $f(X,Y) = \frac{X}{X-Y}$, which is not OVF-integral at $(0,0)$.  Let $b_n=([b_n]_X,[b_n]_Y)$, and note that $v([b_n]_X) - \min\{v([b_n]_X),v([b_n]_Y)\} \ge 0$, hence the limit can not equal the negative value $v(\ell)$, contrary to the claim in \cite{L}.  The issue here is that any sequence $b_n$ satisfying $v(f(b_n))<0$ has to converge to $(0,0)$ {\em from a certain direction} (here the line $X=Y$), so one can't assume its points are `generic' relative to the numerator and denominator of $f$, and at any rate the required OVF-valuation near $(0,0)$ has to `point' in the same direction.  See the second part of Remark~\ref{last} for a finer view of the concept of `direction' that we need to consider.

The last step is to note that the set $T$ is defined by OVF-integrality conditions, so is still not first-order definable, while the mentioned Ganzstellensatz deals with the following variant, which is (first-order) defined by naive integrality conditions:
\[S=S_{{\bar p},{\bar g}} = \left\{{\bar b} \in R^n\ |\ p_1({\bar b}),\ldots,p_m({\bar b})>0\ \wedge\ v(g_1({\bar b})),\ldots,v(g_l({\bar b}))\ge 0\right\} \] 
(where $v(g({\bar x}))\ge 0$ is understood to also mean that $g$ is defined at ${\bar x}$).

Since being finitary is equivalent to OVF-integrality, the set $T$ is actually contained in the Kochen closure of $S$ (see \cite{HY}, Remark 3.3), therefore this gap will not pose a real problem.  However by Remark~\ref{last} the set $T$ need not equal this Kochen closure, contrary to what is claimed in \cite{L} (the proof of Corollary 4.6 there).
 
We also discuss briefly the `Kochen geometry' associated with OVF-integrality (see Definition~\ref{kg}), and demonstrate in Remark~\ref{last} its dependence on the ambient variety with some nice examples.

\section {OVF-integrality and the Kochen geometry} 
 
Given a valued field $(K,v)$ we denote its valuation group by $\Gamma_K$, its valuation ring by $O_K = \{a\in K: v(a)\ge 0\}$, and its ideal of `infinitesimals' by ${\cal{M}}_K = \{a\in K: v(a) > 0\}$.  

Recall that an {\em ordered valued field} (or OVF) is a ordered field with a convex non-trivial valuation ring.  The following definitions by Haskell and the author (see \cite{HY}, Subsection 2.2) intend to refine the naive notion of integrality at a point (consider for example whether $f(X,Y) = \frac{X^2}{X^2+Y^2}$ should be integral at $(0,0)$, as opposed to $f(X,Y) = \frac{X}{Y}$ at the same point).

We begin with naming the valuations which interest us in the context of the OVF category.  Let $L$ be a field extension of $K$.  A valuation $\tilde{v}$ on $L$ which extends $v$ is called an {\em OVF-valuation} if there exists some order $\le_L$ on $L$ such that $(L,\tilde{v},\le_L)\models OVF$. 
 
\begin{definition}   \cite{HY}  \label{valuation near point}
Let $(K,v,\le_K)$ be an OVF, $\bar b \in K^n$. We will say that an OVF-valuation $\tilde{v}$ on $L=K({\bar X})$ is {\em near ${\bar b}$} if for every $f\in L$ such that $f({\bar b})=0$ and every $\gamma \in \Gamma_K$ we have $\tilde{v}(f)>\gamma$.
\end{definition}
 
\begin{definition}   \cite{HY}  \label{ovf-integrality}  
Let $(K,v,\le_K)$ be an OVF. Given $f\in L=K(\bar X)$ and $\bar b\in{K}^n$ we say that $f$ is {\em OVF-integral at $\bar b$} if for any OVF-valuation $\tilde{v}$ on $L$ which is near $\bar b$ we have $\tilde{v}(f)\ge 0$. 

For $T \subseteq K^n$ we say that $f$ is {\em OVF-integral on $T$} if $f$ is OVF-integral at ${\bar b}$ for every ${\bar b} \in T$.
\end{definition}

\begin{remark}  \label{conserv}
By existence of OVF-valuations near ${\bar b}$ (see e.g. \cite{HY}, Proposition 4.2) it is easy to conclude that OVF-integrality is equivalent to naive integrality whenever $f$ is defined at ${\bar b}$. 
\end{remark}

We now give a few remarks about the `Kochen geometry' associated with OVF-integrality (see \cite{HY}, Remark 3.3).  
 
\begin{definition}  \cite{HY}  \label{kg}
For any $Q\subseteq K^n$ we define the {\em Kochen closure} of $Q$ to be the set of points ${\bar x}\in K^n$ such that every function $f\in K({\bar X})$ which is OVF-integral on $Q$ is also OVF-integral at ${\bar x}$.  
\end{definition}

Note that the Kochen closure operation is not a `geometry' in the strict sense of the term, i.e. it is not a matroid, since it does not have finite character.  
It is not hard to show that any closed set is also Kochen-closed, however the converse is false - see Remark~\ref{last} for some nice examples.  On the line $K^1$ the converse is true, i.e. a subset of the line is Kochen-closed exactly when it is closed.  Finally note that in dimensions higher than one there is no `Kochen topology', i.e. it is trivial: in $K^2$ for example the union of $V^{int}(\frac{X}{Y})$ and $V^{int}(\frac{Y}{X})$ (using the notation of \cite{HY}, Remark 3.3) equals $K^2 \setminus \{(0,0)\}$.
 
\begin{remark}  \label{ACVF} 
We note in passing that in the context of ACVFs the Kochen closure is {\em not} always contained in the closure, e.g. the Kochen closure of $K^2 \setminus (O_K)^2$ is $K^2$, since in this context the complement of a Kochen-closed set is never bounded.
\end{remark}

\subsection{Topological statement of OVF-integrality for RCVFs}

Recall that a {\em real closed valued field} (or RCVF) is an OVF which is real closed.  The theory RCVF is the model companion of the theory OVF.  In this section we give a topological property which is equivalent to OVF-integrality over an RCVF.

\begin{proposition}  \label{topol}
Let $(K,v,\le_K)$ be a RCVF, let $f\in K({\bar X})$, and let ${\bar b}\in K^n$.  Then $f$ is OVF-integral at ${\bar b}$ if and only if there is some neighborhood $U$ of ${\bar b}$ such that for ${\bar x}\in U$, if $f({\bar x})$ is defined then it is in $O_K$.
\end{proposition}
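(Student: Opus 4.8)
We would prove the two implications by fairly different arguments, in each case passing to a larger valued field; note first that the statement does not depend on whether ``neighbourhood'' is taken in the valuation topology or the order topology on $K^{n}$, since on an ordered field with a nontrivial convex valuation ring these two topologies coincide. In particular every neighbourhood of $\bar b$ contains a basic box $U_{c}=\{\bar x : v(x_{i}-b_{i})>v(c)\text{ for all }i\}$ with $c\in K^{\times}$, and each such $U_{c}$ is a neighbourhood of $\bar b$. For the direction $\Leftarrow$, suppose some $U_{c}$ witnesses the topological condition, and write $f=p/q$ in lowest terms. Then $(K,v)$ satisfies the sentence $\psi$ — in the language of valued fields, with parameters $\bar b$, $c$ and the coefficients of $p,q$ — that asserts
\[\forall\,\bar x\;\bigl(\,\textstyle\bigwedge_{i} v(x_{i}-b_{i})>v(c)\ \wedge\ q(\bar x)\neq 0\ \Longrightarrow\ v(p(\bar x))\ge v(q(\bar x))\,\bigr).\]
Let $\tilde v$ be any OVF-valuation near $\bar b$ on $L=K(\bar X)$ and fix an order $\le_{L}$ with $(L,\tilde v,\le_{L})\models\mathrm{OVF}$. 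Since RCVF is the model companion of OVF, this OVF embeds into some $(M,w,\le_{M})\models\mathrm{RCVF}$; as $K$ is real closed it has a unique order, so $(K,v,\le_{K})$ is an ordered-valued-field substructure of $(M,w,\le_{M})$, hence an elementary one by model completeness of RCVF, and therefore $(M,w)\models\psi$. Now the generic tuple $\bar X\in L^{n}\subseteq M^{n}$ satisfies the hypothesis of $\psi$ in $M$ — indeed $\tilde v(X_{i}-b_{i})>\gamma$ for every $\gamma\in\Gamma_{K}$, so in particular $>v(c)$, and $q(\bar X)=q\neq 0$ in $L$ — whence $\tilde v(p)\ge\tilde v(q)$, i.e. $\tilde v(f)\ge 0$. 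As $\tilde v$ was arbitrary, $f$ is OVF-integral at $\bar b$.

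For the direction $\Rightarrow$ we would argue contrapositively. Assume no neighbourhood of $\bar b$ works, so that for every $\gamma\in\Gamma_{K}$ the set
\[W_{\gamma}=\{\,\bar x\in K^{n} : \textstyle\bigwedge_{i} v(x_{i}-b_{i})>\gamma,\ \ f\text{ is defined at }\bar x,\ \ v(f(\bar x))<0\,\}\]
is nonempty. The crucial observation is that $W_{\gamma}$ is \emph{open}: it is the intersection of the open box $\{\bigwedge_{i} v(x_{i}-b_{i})>\gamma\}$, the open domain of $f$, and the preimage under $f$ of the open set $\{y : v(y)<0\}$. Since the zero set of a nonzero polynomial over the infinite field $K$ has empty interior, a nonempty open subset of $K^{n}$ cannot be contained in the zero set of a nonzero polynomial. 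Consequently the partial type $\Sigma(\bar x)$ over $K$ consisting of the formulas $v(x_{i}-b_{i})>\gamma$ for all $\gamma\in\Gamma_{K}$ and all $i$, together with ``$f$ is defined at $\bar x$'', $v(f(\bar x))<0$, and $r(\bar x)\neq 0$ for every nonzero $r\in K[\bar X]$, is finitely satisfiable in $(K,v)$: given finitely many of these formulas, bound the finitely many $\gamma$'s by a single $\gamma$, multiply the finitely many $r$'s into a single nonzero polynomial $r$, and take a point of the nonempty open set $W_{\gamma}$ lying off $\{r=0\}$. Realize $\Sigma$ by a tuple $\bar a^{*}$ in a sufficiently saturated elementary extension $(K^{*},v^{*},\le^{*})\succeq(K,v,\le_{K})$.

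The formulas $r(\bar a^{*})\neq 0$ say exactly that $\bar a^{*}$ is algebraically independent over $K$, so $X_{i}\mapsto a^{*}_{i}$ identifies $L=K(\bar X)$ with $K(\bar a^{*})\subseteq K^{*}$; transporting $v^{*}$ and $\le^{*}$ back to $L$ gives an OVF $(L,\tilde v,\le_{L})$ — its valuation ring $O_{K^{*}}\cap K(\bar a^{*})$ is convex and nontrivial — with $\tilde v(f)=v^{*}(f(\bar a^{*}))<0$. It remains to check that $\tilde v$ is near $\bar b$, which we would do by expanding any $h=p_{h}/q_{h}\in L$ with $h(\bar b)=0$ in powers of $\bar X-\bar b$: every non-constant monomial acquires a factor $a^{*}_{i}-b_{i}$ whose $v^{*}$-value exceeds all of $\Gamma_{K}$, and $\Gamma_{K}$ absorbs addition by its own elements, so $v^{*}(p_{h}(\bar a^{*}))$ exceeds $\Gamma_{K}$ while $v^{*}(q_{h}(\bar a^{*}))=v(q_{h}(\bar b))\in\Gamma_{K}$, whence $\tilde v(h)=v^{*}(h(\bar a^{*}))>\gamma$ for every $\gamma\in\Gamma_{K}$. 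Thus $\tilde v$ is an OVF-valuation near $\bar b$ with $\tilde v(f)<0$, so $f$ is not OVF-integral at $\bar b$.

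The delicate point — and exactly where the proof in \cite{L} breaks down — is this converse direction: one cannot produce the required valuation simply by extracting a ``convergent subsequence'' of approximate witnesses, because a valuation near $\bar b$ must be compatible with the \emph{direction} along which such witnesses approach $\bar b$ (for $f=\frac{X}{X-Y}$ at $(0,0)$ this is the line $X=Y$). The genericity conditions $r(\bar x)\neq 0$ in $\Sigma$ are what allow us instead to realize a witness algebraically independent over $K$, so that it can be modelled by the generic point of $\mathbb{A}^{n}$ over $K$, and this is available precisely because each $W_{\gamma}$ is open, hence ``fat'' enough to avoid any finite family of hypersurfaces. By comparison the model-completeness transfer used for $\Leftarrow$ and the Taylor-expansion estimate that $\tilde v$ is near $\bar b$ are routine, so the openness-plus-saturation step is where the real work lies.
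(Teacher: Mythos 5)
Your proof is correct, and while the easy direction is essentially the paper's argument in contrapositive form (the paper quotes existential closedness of the RCVF $(K,v,\le_K)$ among OVFs to pull the generic witness $\bar X$ down into a $\gamma$-box in $K^n$; you push a universal sentence up into a common RCVF extension via model companionship plus model completeness -- the same model-theoretic content), your harder direction genuinely differs from the paper's at the key step. The paper builds a partial type over the \emph{valued field} $(K,v)$ only, and therefore must add the conditions $v(g(\bar X))\ge 0$ for all $g\in \mathscr{I}_{ord}=\{\frac{1}{1+r}: r \mbox{ a sum of squares}\}$, invoking the extension property of $\mathscr{I}_{ord}$ from \cite{HY} to guarantee that the valuation obtained on $L=K(\bar X)$ is compatible with some order, i.e.\ is an OVF-valuation. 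You instead work in the full ordered-valued-field language and realize your type in an elementary extension $(K^*,v^*,\le^*)$ of $(K,v,\le_K)$ itself, so the restriction to $K(\bar a^*)\cong L$ inherits a convex, nontrivial valuation ring and order-compatibility comes for free, with no appeal to $\mathscr{I}_{ord}$; this exploits the fact that the base structure is already an RCVF, and it makes the argument more self-contained, whereas the paper's formulation isolates order-compatibility in conditions that make sense when one only controls valued-field data, in line with the \cite{HY} machinery. You also spell out two points the paper compresses: the genericity step (each $W_\gamma$ is open and nonempty, hence meets the complement of any finite union of hypersurfaces), which the paper phrases loosely as ``we may assume $\bar x_\gamma$ is transcendental over $K$''; and the verification that the resulting valuation is near $\bar b$, via Taylor expansion about $\bar b$ -- there you should just make explicit that for $h\in L$ with $h(\bar b)=0$ you choose a representation $h=p_h/q_h$ with $q_h(\bar b)\ne 0$ (available since $h$ is defined at $\bar b$, e.g.\ the lowest-terms denominator works), so that $v^*(q_h(\bar a^*))=v(q_h(\bar b))\in\Gamma_K$; with that small addition the argument is complete, and it correctly captures the ``direction'' phenomenon that defeats the sequence argument of \cite{L}.
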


\begin{proof}
First assume that $f$ is not OVF-integral at ${\bar b}$, i.e. there exists some OVF-valuation $\tilde{v}$ near ${\bar b}$ such that $\tilde{v}(f) < 0$.  Let $\gamma\in\Gamma_K$, and denote the $\gamma$-neighborhood of ${\bar b}$ by $U_\gamma = \{{\bar x} \in K^n\ |\ \bigwedge_{i=1}^n v(x_i-b_i) > \gamma\}$.  We need to find some ${\bar x} \in U_\gamma$ such that $f({\bar x}) \in K\setminus O_K$ (and in particular is defined).  However since $\tilde{v}$ is an OVF-valuation there is some order $\le_L$ on $L=K({\bar X})$ such that $(L,\tilde{v},\le_L)$ is an OVF, and the tuple ${\bar X}\in L^n$ satisfies $\bigwedge_{i=1}^n \big( v(X_i-b_i) > \gamma \big)\ \wedge\ v(f({\bar X}) < 0\ \wedge\ q({\bar X}) \ne 0$ (where $q$ is the denominator of $f$) .  This is a first-order formula over $(K,v,\le_K)$, which is an existentially closed OVF by virtue of being a RCVF (see for example~\cite{LY}, Section 3).  Hence there is some ${\bar x}\in K^n$ satisfying the same formula, as required.
  
For the other (and harder) direction, assume that for every $\gamma\in\Gamma_K$ there is some ${\bar x}_\gamma\in U_\gamma({\bar b})$ such that $f({\bar x}_\gamma)\in K\setminus O_K$.  Now let $P=P({\bar X})$ be the partial type over the valued field $(K,v)$ which says that the tuple ${\bar X}$: (i) is contained in $U_\gamma({\bar b})$ for every $\gamma\in \Gamma_K$; (ii) satisfies $v(f({\bar X}))<0$; (iii) is transcendental over the field $K$; and (iv) satisfies $v(g({\bar X}))\ge 0$ for every $g\in \mathscr{I}_{ord} =  \{\frac{1}{1+r} : r({\bar X})\in K({\bar X}) \mbox{ is a sum of squares} \}$ (see \cite{HY}, the sequel to Lemma 4.1).  

We may assume without loss that ${\bar x}_\gamma$ is transcendental over $K$, by continuity of $f$ (which is defined at ${\bar x}_\gamma$).  It then clearly follows that $\frac{1}{1+r({\bar x}_\gamma)}\in [0,1] \subseteq O_K$.  Therefore $P$ is indeed a partial type, i.e. it is consistent, hence $P$ has a realization ${\bar X}$ in some valued field $(\hat{L},\tilde{v})$ extending $(K,v)$.  Now the restriction of $\tilde{v}$ to $L=K({\bar X})$ is an OVF-valuation (since $\mathscr{I}_{ord}$ has the extension property - see \cite{HY}) which is near ${\bar b}$, and such that $v(f({\bar X}))<0$.  It follows that $f$ is not OVF-integral at ${\bar b}$, as required. $\ \diamond$
\end{proof}   

An interesting conclusion from Proposition~\ref{topol} is that a basic Kochen-closed set (i.e. the OVF-integrality locus $V^{int}(f)$ of a single function $f$) is open.

\section{The relevant ganzstellensatz}

Let $(K,v)$ be any valued field, let $L$ be a field extension of $K$, and assume $A\subseteq L$ is an $O_K$-algebra such that $A \cap K = O_K $.  The set $T=\{1+ma\ :\ m\in {\cal{M}}_K, a\in A\}$ is multiplicative.  The \emph{integral radical} of $A$ in $L$ is the integral closure (in $L$) of the localization $A_T$, and is denoted by $\sqrt[int]{A}$ (see \cite{HY}, Section 2).

Let ${\bar p}=(p_1,..,p_m)$ be polynomials from $K[{\bar X}]$, let ${\bar g}=(g_1,...,g_l)$ be rational functions from $K({\bar X})$, and define:
\[S_{{\bar p},{\bar g}} = \left\{{\bar b} \in K^n\ |\ p_1({\bar b}),\ldots,p_m({\bar b})>0\ \wedge\ v(g_1({\bar b})),\ldots,v(g_l({\bar b}))\ge 0\right\} \]

The following result by Lavi and the author (see \cite{LY}, Theorem 7.4) gives a characterization of rational functions which are OVF-integral on $S_{{\bar p},{\bar g}}$: first, let $Cone({\bar p})$ denote the positive cone generated by the polynomials $\{p_i\ |\ i\}$.  We may assume $S_{{\bar p},{\bar g}}\neq \emptyset$, hence $-1\notin Cone({\bar p})$, and we may define $I_{\bar p} = \{\frac{1}{1 + f}\ |\  f\in Cone({\bar p})\}$.  Finally let $A_{{\bar p},{\bar g}}$ be the $O_K$-algebra generated by $I_{\bar p} \cup \{g_1,...,g_l\}$ in $L=K({\bar X})$. 
 
\begin{theorem}  \cite{LY}  \label{ganz}
Assume that $(K,v,\le_K)$ is a real closed valued field.   

Then for any $h\in L$, $h$ is OVF-integral on $S_{{\bar p},{\bar g}}$ if and only if $h\in \sqrt[int] {A_{{\bar p},{\bar g}}}$. 
\end{theorem}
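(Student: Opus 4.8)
The plan is to establish the two inclusions between $\sqrt[int]{A_{\bar p,\bar g}}$ and the set $B:=\{h\in L: h\text{ is OVF-integral on }S_{\bar p,\bar g}\}$ separately. The inclusion $\sqrt[int]{A_{\bar p,\bar g}}\subseteq B$ is the routine one. By Definition~\ref{ovf-integrality} the set $B$ equals the intersection $\bigcap_{\tilde v}O_{\tilde v}$ over all OVF-valuations $\tilde v$ on $L$ that are near some point of $S_{\bar p,\bar g}$, so $B$ is an $O_K$-subalgebra of $L$ which is integrally closed in $L$. It therefore suffices to check that $B$ contains the generators of $A_{\bar p,\bar g}$ and is stable under inverting the elements of $T=\{1+ma:m\in{\cal M}_K,\,a\in A_{\bar p,\bar g}\}$. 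For $\bar b\in S_{\bar p,\bar g}$ we have $v(g_j(\bar b))\ge 0$, so each $g_j$ is naively integral at $\bar b$, hence OVF-integral there by Remark~\ref{conserv}; and for $f\in Cone(\bar p)$ we have $f(\bar b)\ge 0$ (a sum of squares times products of the $p_i(\bar b)>0$), so $1+f$ does not vanish at $\bar b$ and $\frac{1}{1+f}$ is naively integral there, hence in $B$ again by Remark~\ref{conserv}. Finally, if $a\in B$ and $m\in{\cal M}_K$ then for every OVF-valuation $\tilde v$ near a point of $S_{\bar p,\bar g}$ we have $\tilde v(ma)=v(m)+\tilde v(a)>0$ (using $\tilde v(a)\ge 0$ and $\tilde v(m)=v(m)>0$), so $\tilde v(1+ma)=0$ and $\frac{1}{1+ma}\in O_{\tilde v}$; thus $(A_{\bar p,\bar g})_T\subseteq B$, and the integral-closedness of $B$ then yields $\sqrt[int]{A_{\bar p,\bar g}}\subseteq B$.

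The reverse inclusion is the substance, and I would prove it by contraposition: given $h\notin\sqrt[int]{A_{\bar p,\bar g}}$ I must produce an OVF-valuation near a point of $S_{\bar p,\bar g}$ at which $v(h)<0$. Since the integral closure of a subring of a field equals the intersection of the valuation rings of that field containing it, there is a valuation $w_0$ of $L$ with $(A_{\bar p,\bar g})_T\subseteq O_{w_0}$ and $w_0(h)<0$. The role of having localized at $T$ is precisely to force such a $w_0$ to be refinable to a valuation extending $v$: if, say, $w_0|_K$ were trivial (the representative bad case; a proper coarsening of $v$ is handled the same way), then some element $a\in A_{\bar p,\bar g}$ with $w_0$-residue $k\in K$, $v(k)<0$, would give $1-\frac{a}{k}\in T$ (as $-\frac1k\in{\cal M}_K$) with positive $w_0$-value, contradicting that its inverse lies in $(A_{\bar p,\bar g})_T\subseteq O_{w_0}$; so no element of $(A_{\bar p,\bar g})_T$ has residue of negative value, and we may compose $w_0$ with a valuation extending $v$ on its residue field to obtain (after passing to the resulting refinement) a valuation $w$ on $L$ with $O_w\subseteq O_{w_0}$ that extends $v$, contains $A_{\bar p,\bar g}$, and still satisfies $w(h)<0$. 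Because $Cone(\bar p)$ contains every sum of squares of polynomials, $I_{\bar p}\subseteq A_{\bar p,\bar g}\subseteq O_w$ forces $w(\frac{1}{1+\sigma})\ge 0$ for all such $\sigma$, and the characterization of OVF-valuations via $\mathscr{I}_{ord}$ together with its extension property (\cite{HY}) identifies $w$ as an OVF-valuation.

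The remaining task — locating a point $\bar b\in S_{\bar p,\bar g}$ that $w$ is near — is the step I expect to be the main obstacle. Coarsening $w$ by the convex hull of $\Gamma_K$ in its value group gives a valuation $w^\flat$ that is trivial on $K$ and whose residue field contains $K$; the residues $\bar b_i$ of the coordinates $X_i$ are the candidate coordinates of $\bar b$, and $w$ is near $\bar b$ by construction. Two things then have to be arranged. First, the residues $\bar b_i$ must be taken in $K$, so that $\bar b\in K^n$; since $K(\bar X)$ has transcendence degree $n$ over $K$ this is not automatic, and I expect it to require a saturation (or existential-closedness) argument in the spirit of the proof of Proposition~\ref{topol}, replacing $K$ by a suitably saturated elementary extension in which the point exists and then transferring back — indeed, via Proposition~\ref{topol} the property ``$h$ is OVF-integral on $S_{\bar p,\bar g}$'' can itself be recast as a first-order property of $(K,v,\le_K)$, which makes such a transfer legitimate. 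Second, one must verify $\bar b\in S_{\bar p,\bar g}$: the inequalities $v(g_j(\bar b))\ge 0$ descend from $g_j\in O_w$ after reduction by the coarsening, while the strict inequalities $p_i(\bar b)>0$ have to be read off from an order $\le_L$ on $L$ witnessing the OVF-property of $w$ in which the $p_i$ are positive — and producing such an order, together with the strictness rather than merely $p_i(\bar b)\ge 0$, is exactly the Positivstellensatz mechanism encoded in the cone $Cone(\bar p)$ and the denominators $1+Cone(\bar p)$ defining $I_{\bar p}$. Once $\bar b$ is in hand, $w$ is an OVF-valuation near $\bar b\in S_{\bar p,\bar g}$ with $w(h)<0$, so $h$ is not OVF-integral on $S_{\bar p,\bar g}$, completing the contrapositive; the rest is standard valuation theory.
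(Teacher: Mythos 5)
This statement is not proved in the paper at all: it is imported verbatim from \cite{LY} (Theorem 7.4 there) and used as a black box, so there is no internal proof to compare your attempt against; I can only judge it on its own merits. Your first inclusion is fine: $B=\bigcap_{\tilde v}O_{\tilde v}$ over OVF-valuations near points of $S_{\bar p,\bar g}$ is indeed an integrally closed $O_K$-subalgebra of $L$ containing the generators of $A_{{\bar p},{\bar g}}$ and the inverses of $T$, which gives $\sqrt[int]{A_{{\bar p},{\bar g}}}\subseteq B$ (the paper itself uses exactly this kind of routine argument inside Theorem~\ref{main}, citing Proposition 4.7 of \cite{LY} for the elements of $I_{\bar p}$).

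The converse, however, is where the entire content of the Ganzstellensatz lives, and there your argument has genuine unfilled gaps rather than a proof. First, the valuation $w$ you extract from ``integral closure $=$ intersection of valuation rings'' need not be near any point of $K^n$: nothing in $(A_{{\bar p},{\bar g}})_T\subseteq O_{w}$ prevents $w(X_i)$ from lying below $-\Gamma_K$ (a ``point at infinity''), and even when the coordinates are $w$-finite their residues live a priori only in the residue field of the coarsening, not in $K$; you acknowledge this and defer it to ``a saturation argument,'' but that transfer itself needs a descent statement relating $\sqrt[int]{A_{{\bar p},{\bar g}}}$ computed over $K$ to the one computed over the saturated extension, which you do not supply (recasting ``$h$ is OVF-integral on $S$'' first-order via Proposition~\ref{topol} handles only one side of the transfer). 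Second, the strict inequalities $p_i(\bar b)>0$ versus merely $p_i(\bar b)\ge 0$ are exactly the real-algebraic (Positivstellensatz) heart of the theorem, and saying this ``is exactly the mechanism encoded in the cone'' names the difficulty without resolving it. Third, identifying $w$ as an OVF-valuation from $w(\frac{1}{1+\sigma})\ge 0$ for \emph{polynomial} sums of squares $\sigma$ is not automatic: the criterion in \cite{HY} uses $\mathscr{I}_{ord}$ built from \emph{rational} sums of squares, and bridging the two amounts to showing the residue field of $w$ is formally real (a Baer--Krull type step), which again is a real argument, not a formality. So the proposal is a plausible strategy outline, consistent in spirit with how such Ganzstellens\"atze are proved, but the decisive steps are conceded rather than carried out.
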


\section{Finitary functions over non-archimedean RCFs}

Let $R$ be a non-archimedean RCF, and let $v$ be the canonical valuation whose ring of integers equals the finite part of $R$.  Let ${\bar p}=(p_1,..,p_m)$ be polynomials in $R[{\bar x}]$, let ${\bar g}=(g_1,...,g_l)$ be rational functions in $L=R({\bar x})$, and define 
\[T=T_{{\bar p},{\bar g}} = \left\{{\bar b} \in R^n\ |\ p_1({\bar b}),\ldots,p_m({\bar b})>0\ \wedge\ g_1,\ldots,g_l \mbox{ are finitary at }{\bar b}\right\} \] 

Define $I_{\bar p} = \{\frac{1}{1 + f}\ |\  f\in Cone({\bar p})\}$, and let $A_{{\bar p},{\bar g}}$ be the $O_R$-algebra generated by $I_{\bar p} \cup \{g_1,...,g_l\}$ in $L$. 

The pieces are now in place for the following:
     
\begin{theorem}  \label{main}
For any $h\in L$, $h$ is finitary on $T_{{\bar p},{\bar g}}$ if and only if $h\in \sqrt[int] {A_{{\bar p},{\bar g}}}$. 
\end{theorem}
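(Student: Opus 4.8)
The plan is to derive Theorem~\ref{main} from the Ganzstellensatz (Theorem~\ref{ganz}), using Proposition~\ref{topol} to translate between ``finitary'' and ``OVF-integral''; the only real content is then a comparison of the sets $T_{\bar p,\bar g}$ and $S_{\bar p,\bar g}$. First I would note that $(R,v,\le_R)$ is a real closed valued field --- it is real closed by hypothesis, and $v$ is convex and non-trivial because $O_R$, the convex hull of $\mathbb Z$, is a proper convex subring of the non-archimedean field $R$ --- so that both Proposition~\ref{topol} and Theorem~\ref{ganz} apply with $K=R$ and $L=R(\bar x)$. Since the finite part of $R$ is exactly $O_R$, Definition~\ref{finitary} and the topological condition of Proposition~\ref{topol} are literally the same statement, so for every $f\in L$ and $\bar b\in R^n$, $f$ is finitary at $\bar b$ if and only if $f$ is OVF-integral at $\bar b$. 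Hence $h$ is finitary on $T_{\bar p,\bar g}$ iff $h$ is OVF-integral on $T_{\bar p,\bar g}$, while by Theorem~\ref{ganz}, $h\in\sqrt[int]{A_{\bar p,\bar g}}$ iff $h$ is OVF-integral on $S_{\bar p,\bar g}$ (we assume $S_{\bar p,\bar g}\ne\emptyset$, as there, so that $A_{\bar p,\bar g}$ is well defined). So it suffices to show that OVF-integrality on $T_{\bar p,\bar g}$ and on $S_{\bar p,\bar g}$ are the same condition, which I would obtain from the two inclusions $S_{\bar p,\bar g}\subseteq T_{\bar p,\bar g}$ and $T_{\bar p,\bar g}\subseteq$ (Kochen closure of $S_{\bar p,\bar g}$).

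The inclusion $S_{\bar p,\bar g}\subseteq T_{\bar p,\bar g}$ is quick: if $\bar b\in S_{\bar p,\bar g}$ then each $g_j$ is defined at $\bar b$ with $v(g_j(\bar b))\ge 0$, so $g_j$ is OVF-integral at $\bar b$ by Remark~\ref{conserv}, hence finitary at $\bar b$ by Proposition~\ref{topol}; as $p_i(\bar b)>0$ too, $\bar b\in T_{\bar p,\bar g}$. This already gives one direction of the theorem: if $h$ is finitary on $T_{\bar p,\bar g}$, it is OVF-integral on $T_{\bar p,\bar g}$, a fortiori on $S_{\bar p,\bar g}$, so $h\in\sqrt[int]{A_{\bar p,\bar g}}$ by Theorem~\ref{ganz}.

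For the converse I would first observe that every generator of $A_{\bar p,\bar g}$ is finitary on $T_{\bar p,\bar g}$: each $g_j$ by definition of $T_{\bar p,\bar g}$, and for $f\in Cone(\bar p)$ because the conditions $p_i>0$ are open, so near any $\bar b\in T_{\bar p,\bar g}$ one has $f\ge 0$ and hence $\frac{1}{1+f}\in(0,1]\subseteq O_R$ wherever defined. Writing $\mathcal O_{T}$ for the set of $h\in L$ that are OVF-integral on $T_{\bar p,\bar g}$, this says $A_{\bar p,\bar g}\subseteq\mathcal O_{T}$. I would then check, straight from the valuative definition of OVF-integrality at a point, that $\mathcal O_{T}$ is an integrally closed $O_R$-subalgebra of $L$ that moreover contains $\frac{1}{1+ma}$ whenever $m\in\mathcal M_R$ and $a\in\mathcal O_{T}$: for an OVF-valuation $\tilde v$ near a point $\bar b\in T_{\bar p,\bar g}$ we have $\tilde v(m)=v(m)>0$ and $\tilde v(a)\ge 0$, so $\tilde v(1+ma)=0$, while a monic integral equation with coefficients in $\mathcal O_{T}$ forces $\tilde v(h)\ge 0$ by the standard valuative estimate. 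Consequently the localization of $A_{\bar p,\bar g}$ at the multiplicative set $\{1+ma:m\in\mathcal M_R,\ a\in A_{\bar p,\bar g}\}$ lies in $\mathcal O_{T}$, and hence so does its integral closure; that is, $\sqrt[int]{A_{\bar p,\bar g}}\subseteq\mathcal O_{T}$. So if $h\in\sqrt[int]{A_{\bar p,\bar g}}$ then $h$ is OVF-integral, equivalently finitary, on $T_{\bar p,\bar g}$, which is the remaining direction. (This last inclusion $T_{\bar p,\bar g}\subseteq$ Kochen closure of $S_{\bar p,\bar g}$ could alternatively be quoted via the discussion around \cite{HY}, Remark~3.3.)

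The step I expect to be the main obstacle is this verification that $\mathcal O_{T}$ is stable under the two operations defining the integral radical --- equivalently, the inclusion $T_{\bar p,\bar g}\subseteq$ Kochen closure of $S_{\bar p,\bar g}$; everything else is bookkeeping once Proposition~\ref{topol} is in hand. Note that the argument uses only this containment and not an equality: as explained in the introduction and in Remark~\ref{last}, $T_{\bar p,\bar g}$ need \emph{not} coincide with the Kochen closure of $S_{\bar p,\bar g}$.
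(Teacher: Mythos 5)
Your proposal is correct and follows essentially the same route as the paper: identify ``finitary'' with OVF-integral via Proposition~\ref{topol}, apply Theorem~\ref{ganz} to $S_{\bar p,\bar g}$, use $S_{\bar p,\bar g}\subseteq T_{\bar p,\bar g}$ for one direction, and for the other show the generators of $A_{\bar p,\bar g}$ are OVF-integral on $T_{\bar p,\bar g}$ and that OVF-integrality on a set is preserved by the generated $O_R$-algebra, the localization and integral closure. The only cosmetic difference is that you verify the finitariness of the elements of $I_{\bar p}$ on $T_{\bar p,\bar g}$ directly (via openness of the conditions $p_i>0$) and spell out the valuative closure properties, where the paper cites Proposition~4.7 of \cite{LY} and asserts the closure properties without proof.
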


\begin{proof}
By Proposition~\ref{topol} being finitary at a point is equivalent to OVF-integrality, hence we may apply Theorem~\ref{ganz} to the set
\[S=S_{{\bar p},{\bar g}} = \left\{{\bar b} \in R^n\ |\ p_1({\bar b}),\ldots,p_m({\bar b})>0\ \wedge\ v(g_1({\bar b})),\ldots,v(g_l({\bar b}))\ge 0\right\} \]  
and conclude that $h$ is finitary on $S$ if and only if $h\in \sqrt[int] {A_{{\bar p},{\bar g}}}$.  
Clearly $S \subseteq T$, hence it is now sufficient to show that if $h$ is finitary (or equivalently, OVF-integral) on $S$ then it has this property on $T$ as well.  

But every generator of $A_{{\bar p},{\bar g}}$ is OVF-integral on $T_{{\bar p},{\bar g}}$ (for elements of $I_{\bar p}$ by Proposition 4.7 of \cite{LY}, for the $g_i$ by definition), and the collection of functions which are OVF-integral on some set is closed under passing to the generated $O_R$-algebra and taking the integral radical.  Therefore by using the above characterization of functions which are OVF-integral on $S$ we are done. $\ \diamond$
\end{proof}
    
\begin{remark}  \label{last}
The set $T$ above is contained in the Kochen closure of $S$ (see Definition~\ref{kg}), however it need not equal this Kochen closure: consider for example $p(X)=X$, and note that for any RCVF $K$ the Kochen closure of $S_p=\{x\in K\ |\ x>0\}$ equals $\{x\in K\ |\ x\ge 0\}$.  

It is instructive to note here that for RCVFs the Kochen closure of any set $Q$ is contained in the usual closure of $Q$, however they need not be equal.  For example the Kochen closure of $Q=\{(x,0)\in K^2\ |\ x>0\}$ does not contain the point $(0,0)$ (thanks to $f=\frac{Y}{X}$).  The reason that the Kochen closure is sensitive to the ambient variety is that the latter determines the set of possible directions.  

More dramatically, although the Kochen closure of $\{(x,y)\in K^2\ |\ x\ne 0\}$ equals $K^2$, the Kochen closure of the half-plane $H=\{(x,y)\in K^2\ |\ x>0\}$ does not contain the point $(0,0)$, this time thanks to $f=\frac{X}{X+Y^2}$ for example.  This last example gives a better appreciation of the set of possible `directions' that we have in mind.
\end{remark}

Note that the set $T$ defined in Theorem~\ref{main} is actually open, hence one can show more directly that being finitary on $T$ is equivalent to OVF-integrality on $T$: all one needs is the easier direction of Proposition~\ref{topol} and Remark~\ref{conserv} (as done in~\cite{L}, Corollary 3.8).  A similar remark applies to the open set $S$, of course.
 
However there are sets $T'$ which are not open for which there is a Ganzstellensatz, and if one wishes to generalize Theorem~\ref{main} to such sets the full force of Proposition~\ref{topol} seems to be required.  For example, in an unpublished work of Haskell and the author we prove a Ganzstellnsatz for sets defined by equalities, therefore one can also obtain a Ganzstellnsatz for sets defined by weak inequalities (a function is OVF-integral on the union $\{x\ |\ p(x)\ge 0\} = \{x\ |\ p(x) > 0\} \cup \{x\ |\ p(x)= 0\}$ exactly when it is in the intersection of the relevant integral radicals).  If we wish to characterize functions which are finitary on the non-open set
\[T' = \left\{{\bar b} \in R^n\ |\ p_1({\bar b}),\ldots,p_m({\bar b})\ge 0\ \wedge\ g_1,\ldots,g_l \mbox{ are finitary at }{\bar b}\right\} \] 
then we would need to produce suitable OVF-valuations near points on the boundary of $T'$, as done in the proof of Proposition~\ref{topol}.
   

\end{document}